\documentclass[12pt,a4paper]{article}

\usepackage[utf8]{inputenc}
\usepackage[T1]{fontenc}
\usepackage{lmodern}
\usepackage{amsmath, amsthm, amssymb}
\usepackage{mathtools}
\usepackage{tikz-cd}
\usepackage{graphicx}
\usepackage{hyperref}
\usepackage{cleveref}
\usepackage{geometry}
\geometry{margin=1in}
\usepackage{enumitem}
\usepackage{microtype}

\newtheorem{definition}{Definition}[section]
\newtheorem{theorem}{Theorem}[section]
\newtheorem{lemma}{Lemma}[section]
\newtheorem{corollary}{Corollary}[section]

\newtheorem{remark}{Remark}
\newtheorem{counterexample}{Counterexample}

\title{The Universal Property of the Henkin Construction: \\
A Categorical Perspective on the Completeness Theorem}
\author{Joaquim Reizi Barreto}
\date{\today}

\begin{document}
\maketitle

\begin{abstract}
In this paper, we present a categorical framework that clarifies the relationship between the completeness and compactness theorems in classical first-order logic. Rather than claiming a natural isomorphism between model constructions, we establish the \textbf{universal property} of the Henkin (or Skolem) construction: the term model arising from a Henkin extension is an \textbf{initial object} in an appropriate category of models. We show that for any consistent theory $T$ and its Henkin extension $T^\dagger$, the term model $F(T)$ admits a unique homomorphism to any model of $T^\dagger$, and this construction is natural with respect to theory morphisms that preserve the additional witness constants. This result provides a rigorous foundation for understanding the completeness theorem from a categorical viewpoint, avoiding the pitfalls of claiming unwarranted isomorphisms while preserving the essential structural insights. We also discuss the limitations of stronger claims and provide counterexamples where naive isomorphism statements fail.
\end{abstract}

\section{Introduction}

The completeness theorem for first-order logic, typically proven via the Henkin construction, and the compactness theorem are fundamental results in mathematical logic. While both theorems concern the existence of models for consistent theories, their relationship has often been described informally. This paper aims to provide a precise categorical analysis of these constructions.

\subsection{Motivation and Previous Approaches}

Traditional presentations of the completeness theorem construct a model (the term model) from a maximally consistent extension of a given theory. The compactness theorem, on the other hand, guarantees the existence of models through finite satisfiability. A natural question arises: \emph{How are these constructions related?}

Previous attempts to formalize this relationship categorically have sometimes claimed that the two constructions yield "naturally isomorphic" models. However, as we will demonstrate, such claims are generally \textbf{false} without strong additional assumptions. The models constructed via different methods may have different cardinalities, satisfy different complete extensions, or lack the structural properties needed for isomorphism.

\subsection{Our Approach}

Instead of pursuing isomorphism, we establish the \textbf{universal property} (initiality) of the Henkin construction:

\begin{itemize}
    \item For a consistent theory $T$, we construct a Henkin (or Skolem) extension $T^\dagger$ by adding witness constants.
    \item The term model $F(T)$ of $T^\dagger$ is the \textbf{initial object} in the category $\mathbf{Mod}(T^\dagger)$ of models of $T^\dagger$ with homomorphisms preserving the witness constants.
    \item For any model $M$ of $T$ and any interpretation $M^\dagger$ extending $M$ to the language of $T^\dagger$, there exists a unique homomorphism $e_T: F(T) \to M^\dagger$.
    \item This construction is \textbf{natural} with respect to theory morphisms that preserve the extended signature.
\end{itemize}

This formulation is both precise and provable, avoiding the pitfalls of false isomorphism claims while capturing the essential categorical structure.

\subsection{Organization}

The paper is organized as follows:
\begin{itemize}
    \item Section \ref{sec:prelim}: Preliminaries on category theory and logic
    \item Section \ref{sec:categories}: Rigorous definitions of the categories $\mathbf{Th}$ and $\mathbf{Mod}$
    \item Section \ref{sec:henkin}: The Henkin construction and the term model functor
    \item Section \ref{sec:main}: Main theorem establishing the universal property
    \item Section \ref{sec:counterexamples}: Counterexamples showing why stronger claims fail
    \item Section \ref{sec:discussion}: Discussion and future directions
\end{itemize}

\section{Preliminaries}
\label{sec:prelim}

\subsection{Categories, Functors, and Natural Transformations}

\begin{definition}[Category]
A \emph{category} $\mathcal{C}$ consists of:
\begin{enumerate}
    \item A collection $\operatorname{Ob}(\mathcal{C})$ of \emph{objects};
    \item For each pair of objects $A, B$, a set $\operatorname{Hom}_\mathcal{C}(A, B)$ of \emph{morphisms};
    \item A composition operation satisfying associativity;
    \item Identity morphisms $\operatorname{id}_A$ for each object $A$.
\end{enumerate}
\end{definition}

\begin{definition}[Functor]
A \emph{functor} $F: \mathcal{C} \to \mathcal{D}$ consists of mappings on objects and morphisms that preserve composition and identities.
\end{definition}

\begin{definition}[Initial Object]
An object $I$ in a category $\mathcal{C}$ is \emph{initial} if for every object $A \in \mathcal{C}$, there exists a unique morphism $I \to A$.
\end{definition}

\subsection{First-Order Logic Basics}

Throughout this paper, we work with first-order logic over a \textbf{countable signature} $\Sigma = (F, R)$, where $F$ is a set of function symbols and $R$ is a set of relation symbols. We assume familiarity with:
\begin{itemize}
    \item Formulas, sentences, and theories
    \item Models (or structures) for a signature
    \item Satisfaction relation $M \models \phi$
    \item Homomorphisms between models
\end{itemize}

\begin{remark}[Countability Assumption]
\label{rem:countability}
We assume the signature is countable. This ensures that the set of all formulas is countable, which is essential for the sequential construction of maximal consistent theories.
\end{remark}

\section{Category-Theoretic Framework}
\label{sec:categories}

\subsection{The Category of Theories}

\begin{definition}[Category $\mathbf{Th}$]
\label{def:th}
The category $\mathbf{Th}$ is defined as follows:
\begin{enumerate}
    \item \textbf{Objects:} A pair $(\Sigma, T)$ where $\Sigma$ is a countable signature and $T$ is a consistent first-order theory in the language of $\Sigma$.
    
    \item \textbf{Morphisms:} A morphism from $(\Sigma_1, T_1)$ to $(\Sigma_2, T_2)$ consists of:
    \begin{enumerate}
        \item A \emph{signature morphism} $\sigma: \Sigma_1 \to \Sigma_2$ (a mapping of function and relation symbols preserving arities);
        \item An \emph{interpretation} that translates formulas of $\Sigma_1$ to formulas of $\Sigma_2$ via $\sigma$;
        \item The requirement that for every $\phi \in T_1$, we have $\sigma(\phi) \in T_2$.
    \end{enumerate}
    
    \item \textbf{Composition:} Defined by composing signature morphisms and translations.
\end{enumerate}
\end{definition}

\begin{remark}
This definition makes morphisms preserve not just provability but the actual signature structure, which is crucial when witness constants or Skolem functions are added.
\end{remark}

\subsection{The Category of Models}

\begin{definition}[Category $\mathbf{Mod}(\Sigma)$]
For a fixed signature $\Sigma$:
\begin{enumerate}
    \item \textbf{Objects:} $\Sigma$-structures (models)
    \item \textbf{Morphisms:} Homomorphisms between models that preserve all function and relation symbols in $\Sigma$
\end{enumerate}
\end{definition}

For a theory $T$ over $\Sigma$, we write $\mathbf{Mod}(T)$ for the full subcategory of $\mathbf{Mod}(\Sigma)$ consisting of models satisfying $T$.

\subsection*{Reduct functor along a signature morphism}
Let $\sigma:\Sigma_1^{*}\to\Sigma_2^{*}$ be a signature morphism between expanded (Henkin) signatures.
The \emph{reduct functor}
\[
\sigma^{*}:\mathbf{Mod}(\Sigma_2^{*})\longrightarrow \mathbf{Mod}(\Sigma_1^{*})
\]
maps a $\Sigma_2^{*}$-structure $M$ to its $\Sigma_1^{*}$-reduct $\sigma^{*}(M)$ by interpreting each $s\in\Sigma_1^{*}$ as the interpretation of $\sigma(s)$ in $M$. On arrows, $\sigma^{*}$ acts as identity on the underlying functions.

\section{The Henkin Construction}
\label{sec:henkin}

\subsection{Henkin Extension of a Theory}

\begin{definition}[Henkin Extension]
Let $T$ be a consistent theory over signature $\Sigma$. 
Form $\Sigma^* := \Sigma \cup \{c_\varphi : \varphi(x) \text{ is a formula with one free variable}\}$.
Let $H(T)$ be the Henkin theory obtained by adding, for every $\varphi(x)$,
\[
\exists x\,\varphi(x) \;\to\; \varphi(c_\varphi).
\]
Finally extend $H(T)$ to a maximal consistent theory $T^\dagger$ over $\Sigma^*$.
We call $T^\dagger$ a Henkin (maximal) extension of $T$.
\end{definition}

\begin{lemma}[Existence of Maximal Consistent Extension]
\label{lem:max_extension}
Every consistent theory $T$ in a countable language can be extended to a maximal consistent theory $T^{\mathrm{mc}}$.
\end{lemma}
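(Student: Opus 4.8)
The plan is to run the classical Lindenbaum argument, exploiting the countability hypothesis (\Cref{rem:countability}) so that the construction is sequential rather than an appeal to Zorn's Lemma. First I would fix an enumeration $\varphi_0,\varphi_1,\varphi_2,\dots$ of all sentences over the signature $\Sigma$; such an enumeration exists precisely because the set of formulas is countable. I then define an increasing chain of theories $T = T_0 \subseteq T_1 \subseteq T_2 \subseteq \cdots$ by recursion: given $T_n$, set $T_{n+1} := T_n \cup \{\varphi_n\}$ if this theory is consistent, and $T_{n+1} := T_n \cup \{\neg\varphi_n\}$ otherwise. Finally I put $T^{\mathrm{mc}} := \bigcup_{n\in\mathbb{N}} T_n$.

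Three things then need to be verified. (1) Each $T_n$ is consistent, by induction on $n$: the base case is the hypothesis on $T$, and for the inductive step, if $T_n \cup \{\varphi_n\}$ is inconsistent then $T_n \vdash \neg\varphi_n$, whence $T_n \cup \{\neg\varphi_n\}$ is logically equivalent to $T_n$ and hence consistent; in the other branch there is nothing to prove. (2) $T^{\mathrm{mc}}$ is consistent: any derivation of a contradiction from $T^{\mathrm{mc}}$ uses only finitely many axioms, which therefore all lie in a single $T_n$, contradicting (1); here we use only that the proof calculus is finitary. (3) $T^{\mathrm{mc}}$ is maximal consistent: given any sentence $\psi$, we have $\psi = \varphi_n$ for some $n$, and by construction either $\varphi_n \in T_{n+1} \subseteq T^{\mathrm{mc}}$ or $\neg\varphi_n \in T_{n+1} \subseteq T^{\mathrm{mc}}$; thus $T^{\mathrm{mc}}$ decides every sentence, so no consistent theory properly extends it.

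The step requiring the most care is (2), the consistency of the limit theory: it is where the finitary character of first-order derivations is genuinely needed, and it is also the point at which an uncountable signature would force a transfinite recursion (or Zorn's Lemma) in place of the simple $\omega$-indexed union. I would also be mildly careful in (1) with the dichotomy ``if $T_n\cup\{\varphi_n\}$ is inconsistent then $T_n\cup\{\neg\varphi_n\}$ is consistent,'' since it silently uses that $T_n$ is already consistent together with the deduction theorem; isolating this as a one-line auxiliary observation keeps the induction transparent. Beyond these standard proof-theoretic facts, no new idea is required.
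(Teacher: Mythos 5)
Your proposal is correct and follows exactly the same Lindenbaum-style construction as the paper's proof (enumerate sentences, extend by $\varphi_n$ or $\neg\varphi_n$ according to consistency, take the union); the paper merely states the construction and asserts the result, whereas you supply the three verifications it leaves implicit. No gaps.
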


\begin{proof}
Enumerate all sentences in the (possibly extended) language as $\{\psi_0, \psi_1, \psi_2, \dots\}$. Define:
\[
T_0 = T, \quad T_{n+1} = 
\begin{cases}
T_n \cup \{\psi_n\} & \text{if consistent}, \\
T_n \cup \{\neg\psi_n\} & \text{otherwise}.
\end{cases}
\]
Then $T^{\mathrm{mc}} = \bigcup_{n=0}^\infty T_n$ is maximal and consistent.
\end{proof}

\begin{remark}
In the Henkin construction we first pass from $T$ to $H(T)$ and then take a maximal consistent extension $T^\dagger$ of $H(T)$.
\end{remark}

\subsection{The Term Model Construction}

\begin{definition}[Term Model]
\label{def:term_model}
Let $T^\dagger$ be a maximal consistent Henkin extension. The \emph{term model} $F(T)$ is constructed as follows:
\begin{enumerate}
    \item \textbf{Universe:} Define an equivalence relation on terms:
    \[
    t \sim s \iff T^\dagger \vdash t = s
    \]
    The universe is $|F(T)| = \{[t] : t \text{ is a term}\}$.
    
    \item \textbf{Interpretation:} For each $n$-ary function symbol $f$:
    \[
    f^{F(T)}([t_1], \dots, [t_n]) = [f(t_1, \dots, t_n)]
    \]
    For each $n$-ary relation symbol $R$:
    \[
    ([t_1], \dots, [t_n]) \in R^{F(T)} \iff T^\dagger \vdash R(t_1, \dots, t_n)
    \]
\end{enumerate}
\end{definition}

\begin{lemma}[Soundness of Term Model]
\label{lem:term_model_sound}
For the term model $F(T)$ and any sentence $\phi$:
\[
\phi \in T^\dagger \iff F(T) \models \phi
\]
\end{lemma}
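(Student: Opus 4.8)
The plan is to prove the biconditional by induction on the logical complexity of $\phi$ — concretely, on the number of connectives and quantifiers — simultaneously for all $\Sigma^*$-sentences, exploiting the two defining features of $T^\dagger$: it is maximal consistent (hence deductively closed and complete, so $T^\dagger \vdash \psi \iff \psi \in T^\dagger$), and it is a Henkin theory, so for every $\Sigma^*$-formula $\varphi(x)$ with one free variable it contains the witness axiom $\exists x\,\varphi(x) \to \varphi(c_\varphi)$.

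Before the main induction I would record a term lemma: for every closed $\Sigma^*$-term $t$ one has $t^{F(T)} = [t]$, proved by a routine induction on the structure of $t$ using the clause $f^{F(T)}([t_1],\dots,[t_n]) = [f(t_1,\dots,t_n)]$ of Definition~\ref{def:term_model}. Combined with the usual substitution lemma for structures, this yields that for any formula $\psi(x)$ with the single free variable $x$ and any closed term $t$, we have $F(T)\models\psi(t)$ iff $F(T)$ satisfies $\psi$ under the assignment $x\mapsto[t]$. Since every element of $|F(T)|$ is of the form $[t]$ for a closed term $t$, this is exactly what lets the quantifier cases pass between elements of the universe and their representing terms.

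For the induction: the atomic case $R(t_1,\dots,t_n)$ is immediate from the interpretation clause for $R$ together with deductive closure, after rewriting each $t_i^{F(T)}$ as $[t_i]$ via the term lemma; the equality atom $t=s$ is handled identically through the definition of $\sim$. The Boolean cases use maximality: $\neg\psi\in T^\dagger \iff \psi\notin T^\dagger$, and $\psi\wedge\chi\in T^\dagger \iff \psi\in T^\dagger$ and $\chi\in T^\dagger$ (similarly for the other connectives), each matching the semantic clause by the induction hypothesis. The existential case $\exists x\,\psi(x)$ is the crux. Forward: if $\exists x\,\psi(x)\in T^\dagger$, the Henkin axiom $\exists x\,\psi(x)\to\psi(c_\psi)$ with modus ponens and deductive closure gives $\psi(c_\psi)\in T^\dagger$; as $\psi(c_\psi)$ has strictly smaller complexity, the induction hypothesis gives $F(T)\models\psi(c_\psi)$, and by the term lemma $[c_\psi]$ witnesses $F(T)\models\exists x\,\psi(x)$. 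Converse: if $F(T)\models\exists x\,\psi(x)$, pick a closed term $t$ with $F(T)\models\psi(t)$; the induction hypothesis gives $\psi(t)\in T^\dagger$, and then the logical validity $\psi(t)\to\exists x\,\psi(x)$ with deductive closure gives $\exists x\,\psi(x)\in T^\dagger$. The universal quantifier reduces to the previous cases via $\forall x\,\psi\equiv\neg\exists x\,\neg\psi$ and completeness of $T^\dagger$, or is treated dually.

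The main obstacle — indeed the only place demanding care — is the bookkeeping in the existential step. One must fix the complexity measure so that $\psi(c_\psi)$ and $\psi(t)$ for closed $t$ are strictly simpler than $\exists x\,\psi(x)$; this works because substituting a closed term for a free variable changes neither the connective count nor the quantifier count. One must also ensure the Henkin signature supplies a witness constant for \emph{every} $\Sigma^*$-formula with one free variable, not only for $\Sigma$-formulas, so that the induction hypothesis remains applicable to the sentences generated by peeling off nested quantifiers. Granting that $T^\dagger$ carries the full Henkin property over $\Sigma^*$, as built into the construction, the induction closes and the lemma follows.
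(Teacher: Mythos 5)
Your proof is correct and follows essentially the same route the paper sketches: structural induction on sentences (organized by connective/quantifier count), with the atomic case handled by the interpretation clauses and the term lemma $t^{F(T)}=[t]$, the Boolean cases by maximality and consistency of $T^\dagger$, and the existential case by the Henkin witness axioms --- you have simply filled in the details the paper leaves implicit. Your closing caveat is well taken: the existential step genuinely requires witness constants for every $\Sigma^*$-formula (not merely every $\Sigma$-formula), a point the paper's definition of the Henkin extension glosses over and which in the standard treatment is secured by iterating the constant-adding construction.
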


\begin{proof}
By structural induction on formulas. The base case follows from the definition of $R^{F(T)}$. The inductive steps use the maximality and consistency of $T^\dagger$.
\end{proof}

\newpage
\section{Main Theorem: Universal Property of the Term Model}
\label{sec:main}

\begin{theorem}[Universal Property of Henkin Construction]
\label{thm:main}
Let $T$ be a consistent theory over signature $\Sigma$, and let $T^\dagger$ be its Henkin extension over $\Sigma^*$. Let $F(T)$ be the term model of $T^\dagger$. Then:

\begin{enumerate}
    \item \textbf{Initiality:} For every model $M$ of $T^\dagger$, there exists a unique homomorphism
    \[
    e_M: F(T) \to M
    \]
    that preserves all symbols in $\Sigma^*$.
    
    \item \textbf{Naturality:} Let $f: (\Sigma_1, T_1) \to (\Sigma_2, T_2)$ in $\mathbf{Th}$ extend to $f^*: (\Sigma_1^*, T_1^\dagger) \to (\Sigma_2^*, T_2^\dagger)$ with underlying signature map $\sigma:\Sigma_1^*\to\Sigma_2^*$. For any $M\models T_2^\dagger$, the diagram
    \[
    \begin{tikzcd}
    F(T_1) \arrow[r, "F(f^*)"] \arrow[d, "e_{\sigma^{*}(M)}"'] 
      & \sigma^{*} F(T_2) \arrow[d, "\sigma^{*}(e_M)"] \\
    \sigma^{*}(M) \arrow[r, equal] & \sigma^{*}(M)
    \end{tikzcd}
    \]
    commutes, where $F(f^*): F(T_1) \to \sigma^{*}F(T_2)$ is given by $[t]\mapsto [\,f^*(t)\,]$.
\end{enumerate}
\end{theorem}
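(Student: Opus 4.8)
The plan is to treat the two parts separately, deriving both from the syntactic definition of $F(T)$ together with Lemma~\ref{lem:term_model_sound}. For part~(1), the only possible homomorphism is forced: every element of $|F(T)|$ is $[t]$ for some closed term $t$, so any homomorphism $e$ preserving the symbols of $\Sigma^*$ must satisfy $e([t]) = t^M$, the interpretation of $t$ in $M$. This gives uniqueness immediately once we know $e$ is well-defined. First I would verify well-definedness: if $[t]=[s]$ then $T^\dagger \vdash t=s$, hence $M \models t=s$ since $M \models T^\dagger$, so $t^M = s^M$; thus $e([t]) := t^M$ is a legitimate function on equivalence classes. Next I would check the homomorphism conditions. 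Preservation of function symbols is the computation $e(f^{F(T)}([t_1],\dots,[t_n])) = e([f(t_1,\dots,t_n)]) = (f(t_1,\dots,t_n))^M = f^M(t_1^M,\dots,t_n^M) = f^M(e([t_1]),\dots,e([t_n]))$, using only the recursive definition of term interpretation. For relation symbols we need the forward implication of a homomorphism: if $([t_1],\dots,[t_n]) \in R^{F(T)}$ then $T^\dagger \vdash R(t_1,\dots,t_n)$, so $M \models R(t_1,\dots,t_n)$, i.e.\ $(t_1^M,\dots,t_n^M) \in R^M$, which is exactly $(e([t_1]),\dots,e([t_n])) \in R^M$. (Note we do not get the converse, so $e_M$ is a homomorphism but generally not an embedding --- this is deliberate and consistent with the paper's disclaimer about isomorphisms.)

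For part~(2), I would first pin down what $F(f^*)$ is and check it is a well-defined homomorphism into the reduct $\sigma^*F(T_2)$. The map sends $[t] \in |F(T_1)|$ (where $t$ is a $\Sigma_1^*$-term) to $[\,f^*(t)\,]$, the class of the translated $\Sigma_2^*$-term in $|F(T_2)|$. Well-definedness uses that $f^*$ is a morphism in $\mathbf{Th}$: if $T_1^\dagger \vdash t=s$ then, since the translation preserves derivability and $f^*$ carries axioms of $T_1^\dagger$ into $T_2^\dagger$, we get $T_2^\dagger \vdash f^*(t)=f^*(s)$, so $[f^*(t)]=[f^*(s)]$ in $F(T_2)$. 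That $F(f^*)$ respects the $\Sigma_1^*$-structure (reading $\sigma^*F(T_2)$ via $\sigma$) is again a direct term-syntax computation: $f^*$ commutes with term formation along $\sigma$, and the relation clause uses preservation of derivability for atomic formulas. Then the square is checked by evaluating both composites on an arbitrary generator $[t]$: the lower-left path gives $e_{\sigma^*(M)}([t]) = t^{\sigma^*(M)}$, and the upper-right path gives $\sigma^*(e_M)(F(f^*)([t])) = \sigma^*(e_M)([f^*(t)]) = (f^*(t))^M$ read as an element of $\sigma^*(M)$. These agree precisely because interpreting a $\Sigma_1^*$-term $t$ in the reduct $\sigma^*(M)$ equals interpreting its translation $f^*(t)$ in $M$ --- the standard substitution/reduct lemma for term evaluation along a signature morphism. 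Since $\sigma^*(M)$ and $|F(T_2)|$ are generated by classes of closed terms, agreement on generators suffices.

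The main obstacle I anticipate is bookkeeping around the translation functor and the reduct, rather than any deep difficulty: one must state and use cleanly the lemma that for any signature morphism $\sigma$, any $\Sigma_2^*$-structure $M$, and any $\Sigma_1^*$-term $t$, one has $t^{\sigma^*(M)} = (f^*(t))^M$ (and similarly $\sigma^*(M)\models\varphi \iff M\models f^*(\varphi)$ for atomic $\varphi$), and then be careful that $F(f^*)$ genuinely lands in the \emph{reduct} $\sigma^*F(T_2)$ and is a homomorphism \emph{there}, not merely a map of underlying sets. I would isolate that reduct/translation compatibility as a preliminary observation (it is essentially the standard fact that $\mathbf{Mod}(-)$ is functorial and that reducts commute with term evaluation) and then the two diagram-chases above become short. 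A secondary point worth a sentence: uniqueness in part~(1) already forces naturality to hold for the \emph{reduct}-valued map, because both $F(f^*)$ followed by $\sigma^*(e_M)$ and $e_{\sigma^*(M)}$ are homomorphisms $F(T_1)\to\sigma^*(M)$, and initiality of $F(T_1)$ in $\mathbf{Mod}(T_1^\dagger)$ gives at most one such --- so once we know both are morphisms in that category, commutativity is automatic. I would present the explicit generator computation anyway, since it also certifies that $F(f^*)$ itself is well-defined.
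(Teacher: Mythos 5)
Your proposal is correct and follows essentially the same route as the paper: defining $e_M([t]) = t^M$, checking well-definedness and the homomorphism clauses, forcing uniqueness by induction on terms, and verifying the naturality square by the generator computation $(f^*(t))^M = t^{\sigma^*(M)}$. Your additional observations (explicitly verifying well-definedness of $F(f^*)$, isolating the reduct/translation lemma, and noting that naturality already follows from initiality once both composites are known to be homomorphisms into $\sigma^*(M)$) are sound refinements of the same argument rather than a different approach.
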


\begin{proof}
\textbf{Part 1 (Initiality):}

Let $M$ be any model of $T^\dagger$. Define $e_M: F(T) \to M$ by:
\[
e_M([t]) = t^M,
\]
where $t^M$ denotes the interpretation of the term $t$ in $M$.

\textbf{Well-definedness:} If $[t] = [s]$ in $F(T)$, then $T^\dagger \vdash t = s$. Since $M \models T^\dagger$, we have $t^M = s^M$. Thus $e_M$ is well-defined.

\textbf{Homomorphism property:} For any function symbol $f$:
\begin{align*}
e_M(f^{F(T)}([t_1], \dots, [t_n])) 
&= e_M([f(t_1, \dots, t_n)]) \\
&= (f(t_1, \dots, t_n))^M \\
&= f^M(t_1^M, \dots, t_n^M) \\
&= f^M(e_M([t_1]), \dots, e_M([t_n])).
\end{align*}
For relation symbols, if $([t_1], \dots, [t_n]) \in R^{F(T)}$, then $T^\dagger \vdash R(t_1, \dots, t_n)$, so $M \models R(t_1, \dots, t_n)$; hence $(e_M([t_1]), \dots, e_M([t_n])) \in R^M$.

\textbf{Uniqueness:} Any homomorphism $h: F(T) \to M$ must satisfy $h([c]) = c^M$ for all constants $c$ (including witness constants). By induction on terms, $h([t]) = t^M$ for all $t$. Thus $h = e_M$.

\textbf{Part 2 (Naturality):}

Let $f$ extend to $f^*$ as stated, with underlying $\sigma$. Define $F(f^*): F(T_1) \to \sigma^{*}F(T_2)$ by
\[
F(f^*)([t]) := [\,f^*(t)\,].
\]
For $[t]\in F(T_1)$,
\[
(\sigma^{*}(e_M)\circ F(f^*))([t])
= \sigma^{*}(e_M)\bigl([\,f^*(t)\,]\bigr)
= (f^*(t))^{M}
= t^{\,\sigma^{*}(M)}
= e_{\sigma^{*}(M)}([t]).
\]
Thus the square commutes.
\end{proof}

\begin{corollary}[Completeness Theorem]
Every consistent theory has a model.
\end{corollary}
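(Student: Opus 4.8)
The plan is to assemble the lemmas already proved. Let $T$ be a consistent theory over a countable signature $\Sigma$. First I would pass to the Henkin extension: adjoin the witness constants $c_\varphi$ to obtain $\Sigma^*$, and let $H(T)$ be $T$ together with all Henkin axioms $\exists x\,\varphi(x)\to\varphi(c_\varphi)$. The step that is not pure bookkeeping is verifying that $H(T)$ is consistent, so that Lemma \ref{lem:max_extension} applies. This is the classical fresh-constant argument: if $S$ is consistent and the constant $c$ does not occur in $S$, then $S\cup\{\exists x\,\varphi(x)\to\varphi(c)\}$ is consistent, since an inconsistency would give both $S\vdash\exists x\,\varphi(x)$ and $S\vdash\neg\varphi(c)$, and freshness of $c$ promotes the latter to $S\vdash\forall x\,\neg\varphi(x)$, contradicting consistency of $S$. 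Running this through an enumeration of the countably many witness axioms (iterating over the stages of the expanding language, since the $c_\varphi$ may be indexed by $\Sigma^*$-formulas) and using that derivations are finite shows every finite subset of $H(T)$ is consistent, hence $H(T)$ is consistent.

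Once $H(T)$ is known consistent, Lemma \ref{lem:max_extension} yields a maximal consistent $T^\dagger \supseteq H(T)$ over $\Sigma^*$, which is precisely a Henkin maximal extension of $T$. Form the term model $F(T)$ of $T^\dagger$ as in Definition \ref{def:term_model}. By Lemma \ref{lem:term_model_sound}, $F(T)\models\phi$ for every $\phi\in T^\dagger$; in particular $F(T)\models\phi$ for every $\phi\in T$, since $T\subseteq H(T)\subseteq T^\dagger$ and a $\Sigma$-sentence is in particular a $\Sigma^*$-sentence. Applying the reduct functor along the inclusion $\Sigma\hookrightarrow\Sigma^*$ then gives a $\Sigma$-structure which still satisfies every axiom of $T$, since satisfaction of a $\Sigma$-sentence depends only on the $\Sigma$-part of a structure; this is the desired model of $T$.

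The main --- indeed the only --- obstacle is the consistency of $H(T)$ flagged above: this is the sole genuinely model-theoretic input, and it is the traditional heart of Henkin's argument. I would also note that Theorem \ref{thm:main} is not needed for the corollary itself; bare satisfaction via Lemma \ref{lem:term_model_sound} already suffices, and the universal property merely upgrades ``$F(T)$ is a model'' to ``$F(T)$ is the canonical (initial) model of $T^\dagger$.'' Finally, combining this corollary with soundness of the proof system recovers the usual form of the completeness theorem, namely $T\vdash\phi \iff T\models\phi$.
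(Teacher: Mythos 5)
Your proof is correct and follows the same route as the paper: Henkin extension, maximal consistent extension via Lemma \ref{lem:max_extension}, term model, and satisfaction via Lemma \ref{lem:term_model_sound}, followed by restriction to the original signature. The one substantive addition is your fresh-constant argument for the consistency of $H(T)$ --- a step the paper's definition of Henkin extension silently presupposes when it says ``finally extend $H(T)$ to a maximal consistent theory'' --- and you are right that this is the only genuinely nontrivial input; the paper's two-line proof of the corollary omits it entirely.
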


\begin{proof}
Given a consistent theory $T$, construct its Henkin extension $T^\dagger$ and term model $F(T)$. Then $F(T) \models T^\dagger$, hence $F(T) \models T$.
\end{proof}

\section{Limitations and Counterexamples}
\label{sec:counterexamples}

While the universal property holds, stronger claims about isomorphisms between different model constructions generally \textbf{fail}. We provide counterexamples.

\begin{counterexample}[Different Cardinalities]
\label{cex:cardinality}
Consider the theory $\mathrm{ACF}_0$ of algebraically closed fields of characteristic zero. This is a complete theory.

\begin{itemize}
    \item The Henkin construction produces a \textbf{countable} term model $F(\mathrm{ACF}_0)$.
    \item By the compactness theorem, $\mathrm{ACF}_0$ has models of every infinite cardinality, including uncountable models like $\mathbb{C}$.
    \item Clearly, $F(\mathrm{ACF}_0)$ and $\mathbb{C}$ are \textbf{not isomorphic} as they have different cardinalities.
\end{itemize}

This shows that models constructed by the Henkin method and compactness-based methods are not generally isomorphic.
\end{counterexample}

\begin{counterexample}[Incomplete theory]
Let $T$ be the theory of fields in the ring language.
Then $M_1=\overline{\mathbb{Q}}$ satisfies $\exists x\,(x^2+1=0)$, 
while $M_2=\mathbb{R}$ satisfies $\neg\exists x\,(x^2+1=0)$.
Hence $M_1 \not\equiv M_2$ and $T$ is incomplete. 
A Henkin maximal extension may choose either completion, so the term model $F(T)$ 
and a compactness-constructed model $M$ need not be elementarily equivalent.
\end{counterexample}

\begin{remark}[When Isomorphism Can Hold]
Isomorphism between models constructed by different methods can hold under strong assumptions:
\begin{itemize}
    \item If $T$ is complete and $\aleph_0$-categorical (all countable models are isomorphic), and both constructions yield countable models, then they are isomorphic.
    \item If we fix a specific way to realize types and ensure both constructions use the same method, isomorphism may hold.
\end{itemize}
However, these require substantial additional structure and cannot be claimed in general.
\end{remark}

\subsection{Issue with Defining a Functor from Compactness}

\begin{remark}[Non-Functoriality of Compactness Construction]
\label{rem:compactness_nonfunctor}
A fundamental issue arises when trying to define a functor $G: \mathbf{Th} \to \mathbf{Mod}$ based on the compactness theorem:

\begin{itemize}
    \item For each theory $T$, compactness guarantees \emph{existence} of a model $G(T)$, but provides \textbf{no canonical choice}.
    \item Given a morphism $f: T_1 \to T_2$ in $\mathbf{Th}$, there is no natural way to define a homomorphism $G(f): G(T_1) \to G(T_2)$ without making arbitrary choices.
    \item Even using ultraproducts with a fixed ultrafilter, the construction depends on arbitrary choices of finite approximations and does not respect theory morphisms functorially.
\end{itemize}

This is why we focus on the universal property of the \emph{Henkin construction}, which does yield a well-defined (though not fully functorial without careful definition) assignment.
\end{remark}

\newpage
\section{Relationship with Compactness}
\label{sec:compactness}

\subsection{The Compactness Theorem}

\begin{theorem}[Compactness Theorem]
\label{thm:compactness}
A theory $T$ is satisfiable if and only if every finite subset of $T$ is satisfiable.
\end{theorem}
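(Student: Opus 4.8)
The plan is to derive the Compactness Theorem as a direct corollary of the completeness machinery already assembled in the paper, rather than proving it from scratch. The nontrivial direction is the right-to-left implication: if every finite subset of $T$ is satisfiable, then $T$ is satisfiable. (The forward direction is immediate, since a model of $T$ is a fortiori a model of each finite subset.) The key observation is that ``every finite subset is satisfiable'' is, by soundness, equivalent to syntactic consistency of $T$: if $T$ were inconsistent, a proof of $\bot$ would use only finitely many axioms, yielding a finite unsatisfiable subset, contradicting the hypothesis. So the proof reduces to the implication ``$T$ consistent $\Rightarrow$ $T$ satisfiable,'' which is precisely the Completeness Theorem established as the corollary to Theorem~\ref{thm:main}.

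Concretely, I would carry out the argument in three steps. First, assume every finite subset of $T$ is satisfiable and argue by contraposition that $T$ is consistent: were $T \vdash \bot$, the finiteness of proofs gives a finite $T_0 \subseteq T$ with $T_0 \vdash \bot$, but then $T_0$ has no model by soundness, contradicting the hypothesis. Second, invoke the Henkin construction: form the Henkin extension $T^\dagger$ over $\Sigma^*$ (Definition~3.1 together with Lemma~\ref{lem:max_extension}), which is consistent because $T$ is. Third, apply Lemma~\ref{lem:term_model_sound} to the term model $F(T)$: since $T \subseteq T^\dagger$, every sentence of $T$ lies in $T^\dagger$ and hence holds in $F(T)$, so $F(T) \models T$ and $T$ is satisfiable. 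The reduct of $F(T)$ to $\Sigma$ is then the desired model.

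The main obstacle is not mathematical depth but bookkeeping about which direction actually requires work and making the finiteness-of-proofs step explicit, since that is the one place where the finite-subset hypothesis is genuinely used (it is what converts a putative inconsistency of the infinite theory $T$ into a contradiction with satisfiability of finite fragments). Everything else is a packaging of results already proven. One should also note that the countability hypothesis on $\Sigma$ (Remark~\ref{rem:countability}) is inherited here because Lemma~\ref{lem:max_extension} was stated for countable languages; a fully general compactness theorem would need the Boolean-prime-ideal / Zorn's-lemma version of maximal consistent extension, but within the scope of this paper the countable case suffices and the proof above is complete as stated.
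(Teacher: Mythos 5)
Your proposal is correct and follows exactly the route the paper itself indicates in Remark~\ref{rem:circularity} (method (1): derive compactness from completeness via finiteness of proofs and soundness), with the completeness step supplied by the Henkin construction and Lemma~\ref{lem:term_model_sound}. In fact your write-up is more explicit than the paper, which only states the theorem and sketches this derivation in the remark; your added care about where the finite-subset hypothesis is used and about the countability restriction is consistent with the paper's own caveats.
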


\begin{remark}[Proof Methods and Circularity]
\label{rem:circularity}
The compactness theorem can be proven in several ways:
\begin{enumerate}
    \item \textbf{Via Completeness:} Using the completeness theorem (every consistent theory has a model), one shows that if $T$ is unsatisfiable, then $T \vdash \bot$, which requires only finitely many axioms, contradicting finite satisfiability.
    
    \item \textbf{Via Ultraproducts:} Using Łoś's theorem, one constructs an ultraproduct of models of finite subsets.
\end{enumerate}

Our paper uses method (1) in the proof sketch, which creates an apparent circularity since we aim to relate completeness and compactness. However, this circularity is conceptual rather than logical: we are analyzing the \emph{relationship} between the constructions, not deriving one theorem from the other independently.

For a fully circular-free presentation, one should:
\begin{itemize}
    \item Prove completeness via Henkin construction (independent of compactness)
    \item Prove compactness via ultraproducts (independent of completeness)
    \item Then analyze the relationship between the resulting models
\end{itemize}
\end{remark}

\subsection{Initial vs. Arbitrary Models}

The key distinction is:
\begin{itemize}
    \item \textbf{Henkin construction} produces an \emph{initial} (universal) model in the category of models with witness constants.
    \item \textbf{Compactness-based constructions} produce \emph{some} model, but not necessarily with any universal property.
\end{itemize}

The universal property of the Henkin construction means there is a unique homomorphism from $F(T)$ to any other model, but generally \textbf{no isomorphism}.

\newpage
\section{Discussion and Future Work}
\label{sec:discussion}

\subsection{Summary of Results}

We have established:
\begin{enumerate}
    \item A rigorous categorical framework for theories and models, with properly defined morphisms including signature mappings
    \item The universal property (initiality) of the Henkin term model
    \item Naturality of the evaluation homomorphisms with respect to theory morphisms
    \item Counterexamples showing that stronger isomorphism claims fail in general
\end{enumerate}

\subsection{Correct vs. Incorrect Claims}

\begin{tabular}{|p{0.45\textwidth}|p{0.45\textwidth}|}
\hline
\textbf{Correct (Provable)} & \textbf{Incorrect (Generally False)} \\
\hline
The term model $F(T)$ is initial in $\mathbf{Mod}(T^\dagger)$ & The term model $F(T)$ is isomorphic to any model of $T$ \\
\hline
There exists a unique homomorphism $F(T) \to M$ for any model $M$ of $T^\dagger$ & Models from Henkin and compactness constructions are naturally isomorphic \\
\hline
The evaluation homomorphisms are natural & Any two models of a consistent theory are isomorphic \\
\hline
\end{tabular}

\subsection{Possible Strengthenings}

Under additional assumptions, stronger results hold:
\begin{itemize}
    \item \textbf{$\aleph_0$-categorical theories:} If $T$ is complete and all countable models are isomorphic, then any two countable constructions yield isomorphic models.
    \item \textbf{Prime models:} If $T$ is complete and has a prime model (elementary substructure of all models), then the term model is prime, hence unique up to isomorphism among prime models.
    \item \textbf{Saturated models:} In certain contexts, one can work in a category of saturated models where uniqueness holds.
\end{itemize}

\subsection{Future Directions}

\begin{enumerate}
    \item \textbf{Extension to other logics:} Investigate whether similar universal properties hold for intuitionistic, modal, or higher-order logics.
    
    \item \textbf{Adjunctions:} Explore possible adjunctions between categories of theories and models, using the Henkin extension as a functor.
    
    \item \textbf{Type theory connections:} Relate the initiality property to universal properties in type theory and the Curry-Howard correspondence.
    
    \item \textbf{Computational aspects:} Develop algorithms for computing evaluation homomorphisms and applying the universal property in automated reasoning.
\end{enumerate}

\newpage
\section{Conclusion}

We have provided a rigorous categorical analysis of the Henkin construction for the completeness theorem. Our main contribution is establishing the \textbf{universal property} (initiality) of the term model, rather than claiming false isomorphisms. This approach:

\begin{itemize}
    \item Provides a solid foundation for categorical logic
    \item Clarifies the precise relationship between syntactic and semantic constructions
    \item Highlights the limitations of naive isomorphism claims
    \item Opens avenues for future research in categorical model theory
\end{itemize}

The universal property captures the essential insight that the Henkin construction provides a "free" or "universal" model, from which all other models can be reached uniquely. This is the correct formulation of the relationship between the completeness theorem and model existence.

\bibliographystyle{amsplain}

\begin{thebibliography}{99}

\bibitem{henkin1949}
Leon Henkin.
\emph{The completeness of the first-order functional calculus}.
Journal of Symbolic Logic, 14(3):159--166, 1949.

\bibitem{changkeisler1990}
C. C. Chang and H. J. Keisler.
\emph{Model Theory}, 3rd edition.
North-Holland, Amsterdam, 1990.

\bibitem{hodges1993}
Wilfrid Hodges.
\emph{Model Theory}.
Cambridge University Press, 1993.

\bibitem{maclane1998}
Saunders Mac Lane.
\emph{Categories for the Working Mathematician}, 2nd edition.
Springer-Verlag, New York, 1998.

\bibitem{makkai1977}
Michael Makkai and Gonzalo E. Reyes.
\emph{First Order Categorical Logic}.
Lecture Notes in Mathematics, Vol. 611. Springer-Verlag, 1977.

\end{thebibliography}

\newpage
\appendix

\section{Technical Details}

\subsection{Proof of Well-Definedness of Term Equivalence}

\begin{lemma}
The relation $t \sim s \iff T^\dagger \vdash t = s$ is an equivalence relation.
\end{lemma}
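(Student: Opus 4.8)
The plan is to verify the three defining properties of an equivalence relation — reflexivity, symmetry, and transitivity — directly, since $\sim$ is defined purely in terms of derivability from $T^\dagger$ and each property will follow from the logical axioms governing $=$ together with modus ponens.

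First I would record reflexivity: for every term $t$ the sentence $t = t$ is either a logical axiom or an immediate universal instantiation of $\forall x\,(x = x)$, so $T^\dagger \vdash t = t$ and hence $t \sim t$. For symmetry, suppose $T^\dagger \vdash t = s$. Applying the Leibniz (congruence) schema $x = y \to (\varphi(x) \to \varphi(y))$ with $\varphi(x) \equiv (x = t)$ and instantiating $x := t$, $y := s$ gives $T^\dagger \vdash t = s \to (t = t \to s = t)$; combining this with the reflexivity fact $T^\dagger \vdash t = t$ and the hypothesis, two applications of modus ponens yield $T^\dagger \vdash s = t$, i.e.\ $s \sim t$. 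Transitivity is entirely analogous: from $T^\dagger \vdash t = s$ and $T^\dagger \vdash s = r$, the instance of the congruence schema with $\varphi(x) \equiv (t = x)$ and $x := s$, $y := r$ gives $T^\dagger \vdash s = r \to (t = s \to t = r)$, and two applications of modus ponens produce $T^\dagger \vdash t = r$, i.e.\ $t \sim r$.

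I expect no genuine obstacle here: the argument uses nothing about $T^\dagger$ beyond its being a first-order theory over $\Sigma^*$ — in particular neither maximality nor the Henkin witness axioms are invoked — and the derivations above are the standard short ones. The only point requiring a little care is purely bookkeeping: fixing which form of the equality axioms one adopts, so that symmetry and transitivity are either taken as primitive or, as above, derived from reflexivity plus the congruence schema. Under any of the usual equivalent axiomatizations the conclusion is immediate, and essentially the same computations also show that $\sim$ is a congruence with respect to the function and relation symbols of $\Sigma^*$, which is precisely what makes the interpretations in Definition~\ref{def:term_model} well posed.
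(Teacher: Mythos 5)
Your proposal is correct and follows essentially the same route as the paper: verify reflexivity, symmetry, and transitivity directly from the equality axioms of the proof system. The only difference is that the paper invokes symmetry and transitivity as primitive equality axioms while you derive them from reflexivity plus the Leibniz congruence schema — a bookkeeping choice you already flag yourself, and one that changes nothing of substance.
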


\begin{proof}
\begin{itemize}
    \item \textbf{Reflexivity:} $T^\dagger \vdash t = t$ by the reflexivity axiom of equality.
    \item \textbf{Symmetry:} If $T^\dagger \vdash t = s$, then $T^\dagger \vdash s = t$ by the symmetry axiom.
    \item \textbf{Transitivity:} If $T^\dagger \vdash t = s$ and $T^\dagger \vdash s = u$, then $T^\dagger \vdash t = u$ by the transitivity axiom and modus ponens.
\end{itemize}
\end{proof}

\subsection{Verification of Functoriality}

\begin{lemma}[Functoriality up to reduct]
Let $f: (\Sigma_1, T_1) \to (\Sigma_2, T_2)$ and $g: (\Sigma_2, T_2) \to (\Sigma_3, T_3)$ be morphisms in $\mathbf{Th}$ extending to $f^*$ and $g^*$ on Henkin extensions, with underlying signature morphisms $\sigma:\Sigma_1^{*}\to\Sigma_2^{*}$ and $\tau:\Sigma_2^{*}\to\Sigma_3^{*}$. Then:
\begin{enumerate}
    \item $F(\operatorname{id}_{T}) = \operatorname{id}_{F(T)}$.
    \item Writing $h^* := g^* \circ f^*$ and using $(\tau\circ\sigma)^{*} = \sigma^{*}\circ\tau^{*}$ on $\mathbf{Mod}$, we have the arrow identity
    \[
      F(h^*) \;=\; \bigl(\sigma^{*} F(g^*)\bigr)\ \circ\ F(f^*)
    \]
    as maps $F(T_1) \to (\tau\circ\sigma)^{*}F(T_3)$.
\end{enumerate}
\end{lemma}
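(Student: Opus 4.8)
The plan is to verify both identities by unwinding Definition~\ref{def:term_model} and the explicit formula for $F(f^*)$ supplied in Theorem~\ref{thm:main}(2), reducing everything to one elementary fact: the translation of a term along a composite signature morphism equals the composite of the translations. No new model-theoretic input is needed here; the statement is the correct ``indexed'' form of functoriality (the source and target of $F$ on an arrow live over a moving base, so the cleanest formulation is precisely the reduct-twisted one in part (2)), and its proof is bookkeeping about how $F$ interacts with reducts.

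First I would dispatch part (1). The identity morphism $\operatorname{id}_T$ extends to the identity morphism $\operatorname{id}_{T^\dagger}$ on the Henkin extension, whose underlying signature morphism is $\operatorname{id}_{\Sigma^*}$; the reduct functor $(\operatorname{id}_{\Sigma^*})^{*}$ is the identity functor on $\mathbf{Mod}(\Sigma^*)$ (recalled in the reduct-functor paragraph of Section~\ref{sec:categories}), so the domain and codomain of $F(\operatorname{id}_T)$ are literally $F(T)$. Since the term translation along $\operatorname{id}_{\Sigma^*}$ fixes every term, the formula $[t]\mapsto[\operatorname{id}_{T^\dagger}(t)]$ from Theorem~\ref{thm:main}(2) reads $[t]\mapsto[t]$, i.e.\ $F(\operatorname{id}_T)=\operatorname{id}_{F(T)}$.

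For part (2) I would proceed in three steps. (a) Record that the underlying signature morphism of $h^*=g^*\circ f^*$ is $\tau\circ\sigma$, and that reduct functors compose contravariantly, $(\tau\circ\sigma)^{*}=\sigma^{*}\circ\tau^{*}$ on $\mathbf{Mod}$, so that $\sigma^{*}F(g^*)\circ F(f^*)$ really is a map $F(T_1)\to(\tau\circ\sigma)^{*}F(T_3)$, with the same codomain as $F(h^*)$. (b) Prove by structural induction on terms that $h^*(t)=g^*(f^*(t))$: for a variable or a constant this is immediate from the definition of the translation, and the function-symbol clause is exactly the recursion step applied twice. (c) Evaluate on a representative, using that $\sigma^{*}$ acts as the identity on underlying functions:
\[
\bigl(\sigma^{*}F(g^*)\circ F(f^*)\bigr)([t])
= F(g^*)\bigl([f^*(t)]\bigr)
= [\,g^*(f^*(t))\,]
= [\,h^*(t)\,]
= F(h^*)([t]).
\]
Since $F(h^*)$ and the composite agree on every generator $[t]$, and each factor is already a well-defined map on $\sim$-classes by Theorem~\ref{thm:main}(2), they coincide as morphisms.

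I expect the only real friction to be step (b) together with the implicit well-definedness it rests on: one must be sure the translation $f^*$ sends terms of $\Sigma_1^{*}$ to terms of $\Sigma_2^{*}$ compatibly with provable equality, i.e.\ that $T_1^\dagger\vdash t=s$ forces $T_2^\dagger\vdash f^*(t)=f^*(s)$, which is where the morphism condition ``$\phi\in T_1^\dagger\Rightarrow\sigma(\phi)\in T_2^\dagger$'' of Definition~\ref{def:th} is invoked, applied to the sentences $t=s$. Granting this (it is already presupposed by the very statement of Theorem~\ref{thm:main}(2)), the composite of two such translations is again of this form, so no circularity and no hidden choice enter, and the displayed computation closes the argument.
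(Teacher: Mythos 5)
Your proposal is correct and follows essentially the same route as the paper: both reduce the identities to the computation $[\,g^*(f^*(t))\,]=[\,(g^*\circ f^*)(t)\,]$ on representatives, with part (1) immediate from the action on terms. You merely spell out two points the paper leaves implicit (the structural induction showing $h^*(t)=g^*(f^*(t))$ and the well-definedness of the translations on $\sim$-classes), which is a fuller version of the same argument rather than a different one.
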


\begin{proof}
(1) Immediate from the definition on terms. \\
(2) For $[t]\in F(T_1)$,
\[
\bigl(\sigma^{*}F(g^*)\bigr)\bigl(F(f^*)([t])\bigr)
= \sigma^{*}F(g^*)\bigl([\,f^*(t)\,]\bigr)
= \sigma^{*}\bigl([\,g^*(f^*(t))\,]\bigr)
= [\, (g^*\circ f^*)(t)\,]
= F(h^*)([t]).
\]
\end{proof}

\end{document}